\documentclass[a4paper,10pt]{amsart}
\usepackage{amsmath,amssymb,amsthm,amscd}
\usepackage[all]{xy}
\renewcommand{\iff}{if and only if }
\newcommand{\st}{such that }


\newcommand{\ModR}{\hbox{{\rm Mod-}R}}

\newcommand{\Add}{\mathrm{Add}}
\newcommand{\Prod}{\mathrm{Prod}}


\DeclareMathOperator{\Hom}{Hom}

\DeclareMathOperator{\Ker}{Ker}



\usepackage[usenames]{color}

\theoremstyle{plain}
\newtheorem{thm}{Theorem}[section]
\newtheorem{prop}[thm]{Proposition}

\newtheorem{conj}[thm]{Conjecture}
\newtheorem{cor}[thm]{Corollary}
\theoremstyle{definition}
\newtheorem{defn}[thm]{Definition}
\newtheorem{exm}[thm]{Example}

\theoremstyle{remark}
\newtheorem*{rem}{Remark}

 \begin{document}

\keywords{($\Sigma$)-algebraically compact module, strongly compact cardinal, large ultrapower, free module}

\title{$\Sigma$-algebraically compact modules and $\mathbf L_{\omega _1\omega}$-compact cardinals}

\author{\textsc{Jan \v Saroch}}
\address{Charles University, Faculty of Mathematics and Physics, Department of Algebra \\ 
Sokolovsk\'{a} 83, 186 75 Praha~8, Czech Republic}
\email{saroch@karlin.mff.cuni.cz}

 

\thanks{The research of the author has been supported by grant GA\v CR 14-15479S}

\date{\today}

\begin{abstract} We prove that the property $\Add(M)\subseteq \Prod(M)$ characterizes $\Sigma$-algebraically compact modules if $|M|$ is not $\omega$-measurable. Moreover, under a large cardinal assumption, we show that over any ring $R$ where $|R|$ is not $\omega$-measurable, any free module $M$ of $\omega$-measurable rank satisfies $\Add(M)\subseteq \Prod(M)$, hence the assumption on $|M|$ cannot be dropped in general (e.g. over small non-right perfect rings). In this way, we extend results from a~recent paper \cite{B} by Simion Breaz.\end{abstract} 

\maketitle
\vspace{4ex}

\section{Introduction}
\label{sec:intro}

Algebraically compact modules, and correlatively the $\Sigma$-algebraically compact ones (see Preliminaries section for definitions), represent the cornerstone of model theory of modules. They serve as the sufficiently saturated objects in the area, capturing at the same time nontrivial amount of information on how wildly (or tamely) all modules over a fixed ring behave. Algebraically compact modules (also called pure-injective) have been studied intensively for decades: we refer to \cite{Z} and \cite{P} for an introduction to the topic, \cite{H-Z} for applications to direct sum decompositions and pure-semisimple rings, \cite{AST} for a recent application in the approximation theory of modules, or \cite{G-AH} for a generalization of the concept.

In this context, it seems rather surprising that a completely new characterization of $\Sigma$-algebraically compact modules occured recently in \cite{B}, where the following theorem was proved:

\begin{thm} \label{t:Breaz} 
Assume that there are no measurable cardinals. Then a module $M$ over a ring $R$ is $\Sigma$-algebraically compact \iff $\Add(M)\subseteq \Prod(M)$.
\end{thm}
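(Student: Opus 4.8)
The plan is to route both implications through Zimmermann's classical characterization: $M$ is $\Sigma$-algebraically compact \iff $M^{(\kappa)}$ is algebraically compact (pure-injective) for every cardinal $\kappa$, equivalently \iff the canonical pure embedding $M^{(\kappa)}\hookrightarrow M^\kappa$ splits for every $\kappa$. The forward implication needs no set-theoretic hypothesis. If $M$ is $\Sigma$-algebraically compact, then each $M^{(\kappa)}$ is pure-injective, so the canonical pure monomorphism $M^{(\kappa)}\hookrightarrow M^\kappa$ splits and hence $M^{(\kappa)}\in\Prod(M)$. Since $\Prod(M)$ is closed under direct summands and every object of $\Add(M)$ is a direct summand of some $M^{(\kappa)}$, we conclude $\Add(M)\subseteq\Prod(M)$.

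For the converse I would first reduce the entire statement to a single assertion: \emph{$M$ is pure-injective}. Indeed, assume $\Add(M)\subseteq\Prod(M)$ and suppose we already knew that $M$ were pure-injective. Then every product $M^\lambda$ is pure-injective, while for each $\kappa$ the hypothesis gives that $M^{(\kappa)}\in\Prod(M)$ is a direct summand of some $M^\lambda$; a direct summand of a pure-injective module is pure-injective, so $M^{(\kappa)}$ is pure-injective for every $\kappa$, which is exactly $\Sigma$-algebraic compactness by the criterion above. It is essential that the hypothesis $\Add(M)\subseteq\Prod(M)$ is fed in here, since pure-injectivity of $M$ alone does not in general imply that of $M^{(\omega)}$; this is precisely the gap between algebraic and $\Sigma$-algebraic compactness.

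So the whole content of the converse is to show that $\Add(M)\subseteq\Prod(M)$ forces $M$ to be pure-injective, and this is where the absence of measurable cardinals enters. I would use the summation criterion for pure-injectivity: $M$ is pure-injective \iff for every set $I$ the summation map $\Sigma_I\colon M^{(I)}\to M$, $(x_i)_{i\in I}\mapsto\sum_{i\in I}x_i$, extends to a homomorphism $M^I\to M$. From $M^{(I)}\in\Prod(M)$ one obtains a split epimorphism $p\colon M^\lambda\to M^{(I)}$ with section $u$, so that $g:=\Sigma_I\circ p\colon M^\lambda\to M$ satisfies $g\circ u=\Sigma_I$; that is, $\Sigma_I$ already extends along the \emph{abstract} embedding $u\colon M^{(I)}\to M^\lambda$. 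The remaining task is to transport this into an extension along the \emph{canonical} embedding $M^{(I)}\hookrightarrow M^I$.

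The hard part will be exactly this transport step, and it is here that non-measurability is indispensable. The mechanism is a support argument of \L o\'s--Eda type: a homomorphism from a product $M^\lambda$ into the direct sum $M^{(I)}$ is governed by the supports of the images of the product coordinates, and the non-existence of countably complete non-principal ultrafilters (the absence of measurable cardinals, applied to the index sets, whose sizes must be kept non-measurable) prevents these supports from spreading across a measure-one set in an uncontrolled fashion. This is what permits one to re-index and compare the two embeddings of $M^{(I)}$ and to promote $g$ to a genuine extension of $\Sigma_I$ along the canonical map, yielding pure-injectivity of $M$. I expect the delicate points to be the careful bookkeeping of supports together with the cardinal arithmetic ensuring that all index sets in play remain non-measurable; by comparison the algebraic skeleton (Zimmermann's characterization, the summation criterion, and the reduction above) is routine.
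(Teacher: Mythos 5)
Your forward direction is fine, and your reduction of the converse to the single claim ``$M$ is pure-injective'' is correct and worth keeping: if $M$ is pure-injective then so is every $M^\lambda$, and the hypothesis $\Add(M)\subseteq\Prod(M)$ exhibits each $M^{(\kappa)}$ as a direct summand of some $M^\lambda$, hence pure-injective, which is exactly Definition~\ref{d:sigma}(1). The problem is that the remaining claim --- that $\Add(M)\subseteq\Prod(M)$ together with the absence of measurable cardinals forces $M$ to be pure-injective --- is where all the content of the theorem lives, and your proposal does not prove it. The split epimorphism $p\colon M^\lambda\to M^{(I)}$ with section $u$ gives an extension of the summation map along the \emph{abstract} pure embedding $u\colon M^{(I)}\to M^\lambda$, but the summation criterion requires an extension along the \emph{canonical} embedding $M^{(I)}\hookrightarrow M^I$; extendability of a map along one pure embedding into a product of copies of $M$ says nothing about extendability along another unless $M^{(I)}$ is already pure-injective, which is (a consequence of) what you are trying to prove. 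The ``transport''/``re-indexing'' step is a placeholder, not an argument: there is no map $M^I\to M^\lambda$ (or in the reverse direction) intertwining the two embeddings that you could compose with $g$, and the support heuristics you invoke do not produce one.

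The paper proves the statement via Theorem~\ref{t:main2} (of which Theorem~\ref{t:Breaz} is the special case where no cardinal is $\omega$-measurable) and attacks a different, more tractable target: the descending chain condition of Definition~\ref{d:sigma}(2). It uses the split epimorphism supplied by the hypothesis in the form $f\colon M^I\to M^{(J)}$ with $|J|>|M|$ --- a surjection from a product onto a large direct sum --- and feeds it into the Dugas--Zimmermann-Huisgen-type Theorem~\ref{t:huisgen}: modulo finitely many coordinates of $J$ and the intersection $\bigcap_{n<\omega}\psi_n$, the image of $\psi_{n_0}\bigcap_{n=1}^k\Ker(\pi_{\mathcal D_n})$ is controlled, where the $\mathcal D_n$ are $\omega_1$-complete (hence, with no measurable cardinals, principal) ultrafilters. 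A cardinality count (Proposition~\ref{p:enhance}) then forces the chain $\psi_n(M)$ to stabilize, since otherwise the direct sum $\bigoplus_{j\in J\setminus J'}\psi_{n_0}(V_j)/\bigcap_{n<\omega}\psi_n(V_j)$ would be too large to be covered by the image. To complete your outline you would have to replace the ``transport'' step by this product-to-direct-sum analysis; note that it delivers the DCC directly, which makes your (correct) reduction to pure-injectivity unnecessary.
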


In this short note, we give an example showing that the large cardinal assumption cannot be weakened below `no $\mathbf L_{\omega_1\omega}$-compact cardinals' (Section~\ref{sec:indep}). We also generalize Theorem~\ref{t:Breaz} by dropping the large cardinal hypothesis and assuming that $|M|$ is not $\omega$-measurable instead (Section~\ref{sec:refine}).

\smallskip

It remains open whether Theorem~\ref{t:Breaz} holds assuming only the nonexistence of $\mathbf L_{\omega _1\omega}$-compact cardinals.

\bigskip          
\section{Preliminaries}
\label{sec:prelim}

Unless stated otherwise, we work in ZFC. Let $R$ be a (unital, associative) ring. We denote by $\ModR$ the class of all (right $R$-) modules. The elements of this class are the models of the first-order theory of right $R$-modules, i.e. the theory in the language of abelian groups extended by unary function symbols $\cdot r$, for all $r\in R$, which are written in postfix notation. As usual, we typically omit $\cdot$. For a~module $M$ and a set $I$, we denote by $M^{(I)}$ the direct sum of $I$ copies of $M$, i.e. the submodule of the direct power $M^I$ consisting of all elements with finite support.

Probably the most important formulas in model theory of modules are the positive-primitive ones, i.e. the existential formulas whose quantifier-free core is a conjunction of positive atomic formulas. This stems from the result of Baur, Monk and Garavaglia that any complete theory of modules admits elimination of quantifiers up to positive-primitive formulas (pp-formulas, for brevity). The set of all pp-formulas in a fixed number of free variables is naturally ordered by setting $\varphi\leq\psi$ \iff $M\models \varphi\rightarrow\psi$ for all modules $M$.

Each pp-formula $\psi$ with one free variable defines the subfunctor of the forgetful functor from $\ModR$ to Mod-$\mathbb Z$ via the assignment $M\mapsto \psi(M)$. This functor is often called a \emph{finite matrix functor}, and it commutes with direct products and direct sums. 

\smallskip

For a module $N$ and its submodule $M$, we say that $M$ is \emph{pure} in $N$, provided that for any pp-formula $\psi(x_0,\dots,x_{n-1})$, we have $\psi(M)=M^n\cap\psi(N)$. We call $\psi(M) = \psi M = \{\bar m\in M^n\mid M\models \psi(\bar m)\}$ a \emph{pp-definable subgroup} of $M$. The basic examples of pure submodules are direct summands and elementary submodels.

\begin{defn} \label{d:algcomp} We say that an $R$-module $M$ is \emph{algebraically compact} (or \emph{pure-injective}) if one of the following equivalent statements are satisfied:
\begin{enumerate}
\item Whenever $M$ is pure in a module $N$ then it splits, i.e. there exists a~homomorphism $\pi:N\to M$ with $\pi\restriction M = id_M$.

\item Every system of pp-formulas (with one free variable) with parameters from $M$ which is finitely satisfied in $M$ is actually realized in $M$.

\item Every system, in arbitrary many unknowns, of $R$-linear equations with parameters from $M$ which is finitely satisfied in $M$ actually has a solution in $M$.
\end{enumerate}
\end{defn}

It should not come as a surprise that each module can be elementarily embedded in an algebraically-compact one. There even exists a minimal such extension. Algebraically compact modules are closed under taking arbitrary direct products.

Basic examples over the ring $\mathbb Z$ include $\mathbb Q, \mathbb Z_{p^\infty}$ and $\mathbb J_p$ for $p$ prime, where $\mathbb Z_{p^\infty}$ is the Pr\"ufer $p$-group and $\mathbb J_p$ denotes the group of $p$-adic integers.

\begin{defn} \label{d:sigma} Let $M$ be an $R$-module. We say that $M$ is \emph{$\Sigma$-algebraically compact} (or \emph{$\Sigma$-pure-injective}) if one of the following equivalent statements are satisfied:
\begin{enumerate}
\item The module $M^{(\kappa)}$ is algebraically-compact for any/every infinite cardinal~$\kappa$.
\item Every descending chain of pp-definable (by pp-formulas with one free variable) subgroups of $M$ eventually stabilizes.
\item The complete theory Th$(M)$ is totally transcendental.
\end{enumerate}
\end{defn}

From the three examples of algebraically compact abelian groups given above, only $\mathbb J_p$ is not $\Sigma$-algebraically compact. In fact, an abelian group is $\Sigma$-algebraically compact \iff it is the direct sum of a divisible group and a group of bounded exponent. Notice that by $(3)$ from the definition, $\Sigma$-algebraically compact modules are closed under taking elementarily equivalent modules. They are also closed under pure submodules, by $(2)$, and under direct powers. Readers interested in more detailed description of (and relations between) the notions of purity and algebraic compactness can consult Chapter 2 from \cite{GT}.

\smallskip

For a module $M$, we denote by $\Add(M)$ the class of all modules isomorphic to direct summands in $M^{(I)}$, where $I$ is an arbitrary set. For example, if $M$ is a nonzero free module (i.e. isomorphic to $R^{(I)}$ for some nonempty set $I$) then $\Add(M)$ denotes precisely the class of all projective modules.

Dually, we define $\Prod(M)$ as the class of all modules isomorphic to direct summands in $M^I$, where $I$ is arbitrary. If $M$ is $\Sigma$-algebraically compact, then all models of Th$(M)$ are contained in $\Prod(M)$. Since $M^{(I)}$ is always pure in $M^I$, it follows that $\Add(M)\subseteq \Prod(M)$ for every $\Sigma$-algebraically compact module $M$.

\smallskip

We call a ring $R$ \emph{right perfect} if every module pure in a free module splits. For instance, if $R$ is $\Sigma$-algebraically compact as a right $R$-module over itself, then $R$ is right perfect. There are plenty of examples of right perfect rings, e.g. right artinian rings, as well as those which do not satisfy this condition: $\mathbb Z$, $\mathbb Q[[x]]$ etc.

Finally, a ring $R$ is called \emph{right pure-semisimple} if all right modules are $(\Sigma)$-algebraically compact.

\bigskip          
\section{Non-algebraically compact modules satisfying $\Add(M)\subseteq \Prod(M)$}
\label{sec:indep}

In this section, we will deal with several types of large cardinals. All definitions are standard, except maybe for the one of an \emph{$\omega$-measurable cardinal} by which we mean a cardinal greater than or equal to the first measurable cardinal. Equivalently, a~cardinal admitting a nonprincipal $\omega_1$-complete ultrafilter. The following notion was originally defined in terms of infinitary logic.

\begin{defn} \label{d:compact} Let $\kappa, \nu$ be infinite cardinals. Following \cite[Definition 2.17]{EM} (see also \cite{E}), we say that $\kappa$ is \emph{$\mathbf L_{\nu\omega}$-compact} if for every set $I$, every $\kappa$-complete filter on $I$ can be extended to a $\nu$-complete ultrafilter. Moreover, $\kappa$ is called \emph{strongly compact} if it is uncountable and $\mathbf L_{\kappa\omega}$-compact.
\end{defn}

We list some basic properties of $\mathbf L_{\nu\omega}$-compact cardinals in the following

\begin{rem} If there exists an $\mathbf L_{\nu\omega}$-compact cardinal $\kappa$, then $\kappa\geq\nu$ and all cardinals $\mu\geq\kappa$ are $\mathbf L_{\nu\omega}$-compact, too. Moreover, every $\mathbf L_{\nu\omega}$-compact cardinal is $\omega$-measurable and $\mathbf L_{\lambda\omega}$-compact, where $\lambda$ is the first measurable cardinal, provided that $\nu$ is uncountable. The latter follows from the well-known fact that any $\omega_1$-complete ultrafilter has to be $\lambda$-complete, where $\lambda$ is the first measurable cardinal.
\end{rem}

The class of $\mathbf L_{\nu\omega}$-compact cardinals was intensively studied by Eda in \cite{E}. He gave a thorough characterization of these cardinals, besides other things in terms of vanishing of the $\Hom_R(-,R)$-functor in $\ModR$. Recently, $\mathbf L_{\nu\omega}$-compact cardinals, under the name \emph{$\nu$-strongly compact}, have re-emerged in \cite{BM}---mostly in the module-theoretic context again.

\smallskip
The following proposition gives us enough information on the cardinality of large ultrapowers. We prove it along the lines of the classic \cite[Theorems 1.17 and 1.25]{FMS}.

\begin{prop} \label{p:upower} Let $\kappa$ be a regular $\mathbf L_{\nu\omega}$-compact cardinal and $\mu, \lambda$ cardinals \st $\mu=\mu^{<\kappa}, \lambda=\lambda^{<\kappa}$. Then there exists a $\nu$-complete ultrafilter $\mathcal U$ on $\lambda$ \st $|\mu^\lambda/\mathcal U|=\mu^\lambda$.
\end{prop}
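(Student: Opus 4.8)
The upper bound $|\mu^\lambda/\mathcal U|\le\mu^\lambda$ holds for \emph{any} ultrafilter on $\lambda$, since the ultrapower is a quotient of the set of all functions $\lambda\to\mu$. Hence the whole content lies in the lower bound, and I would obtain it by exhibiting a family $G$ of $\mu^\lambda$ functions $\lambda\to\mu$ that are pairwise distinct modulo the ultrafilter we build. The plan is to first produce a \emph{proper $\kappa$-complete} filter $\mathcal F$ on $\lambda$ that already forces all these distinctions, i.e. contains $\{d:g(d)\ne h(d)\}$ for all distinct $g,h\in G$, and only afterwards to invoke $\mathbf L_{\nu\omega}$-compactness of $\kappa$ to extend $\mathcal F$ to a $\nu$-complete ultrafilter $\mathcal U$. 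Since $\mathcal F\subseteq\mathcal U$, the members of $G$ remain pairwise distinct modulo $\mathcal U$, giving $|\mu^\lambda/\mathcal U|\ge|G|=\mu^\lambda$, and equality follows.

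The heart of the argument is the construction of $G$, and this is where the hypotheses $\lambda=\lambda^{<\kappa}$ and $\mu=\mu^{<\kappa}$ are used. I would realise the domain as $[\lambda]^{<\kappa}$, which is legitimate since $\lambda^{<\kappa}=\lambda$, and index $G$ by all functions $\bar\alpha:\lambda\to\mu$. For each small set $s\in[\lambda]^{<\kappa}$ fix an injection $c_s$ of ${}^s\mu$ into $\mu$; this exists because $|{}^s\mu|=\mu^{|s|}\le\mu^{<\kappa}=\mu$. Then set $f_{\bar\alpha}(s)=c_s(\bar\alpha\restriction s)$. Distinct $\bar\alpha$ give distinct $f_{\bar\alpha}$ (they already differ on some singleton $s$), so $|G|=\mu^\lambda$. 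The decisive \emph{$\kappa$-separation} property is the following: given fewer than $\kappa$ distinct members $f_{\bar\alpha_i}$, pick for each pair $i\ne j$ a point of $\lambda$ on which $\bar\alpha_i$ and $\bar\alpha_j$ disagree, and let $s$ collect these points; as $\kappa$ is regular there are fewer than $\kappa$ pairs, so $s\in[\lambda]^{<\kappa}$, and at this single $s$ the restrictions $\bar\alpha_i\restriction s$ are pairwise distinct, whence the values $f_{\bar\alpha_i}(s)=c_s(\bar\alpha_i\restriction s)$ are pairwise distinct.

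Finally I would let $\mathcal F$ be the $\kappa$-complete filter on $\lambda\cong[\lambda]^{<\kappa}$ generated by the difference sets $\{s:f_{\bar\alpha}(s)\ne f_{\bar\beta}(s)\}$. Properness is exactly what the separation property delivers: any intersection of fewer than $\kappa$ generators mentions fewer than $\kappa$ of the $\bar\alpha$'s, and the separation property produces a common point of difference for them, so the intersection is nonempty. Using regularity of $\kappa$ (so that the generated $\kappa$-complete filter consists precisely of supersets of $<\kappa$-intersections of generators) one sees that $\mathcal F$ is a proper $\kappa$-complete filter, and $\mathbf L_{\nu\omega}$-compactness of $\kappa$ extends it to a $\nu$-complete ultrafilter $\mathcal U$, finishing the proof as in the first paragraph. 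The main obstacle — and the only place the arithmetic hypotheses genuinely enter — is the construction of a $\kappa$-separated family of full size $\mu^\lambda$ on a domain of size merely $\lambda$; once the $\kappa$-completeness and properness of $\mathcal F$ are secured, the compactness of $\kappa$ does the rest automatically. The degenerate parameter ranges (such as $\nu=\omega$, where $\kappa$-completeness is essentially vacuous) should be checked but present no real difficulty.
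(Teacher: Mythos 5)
Your proof is correct and is essentially the paper's argument: both index everything by $[\lambda]^{<\kappa}$ (via $\lambda^{<\kappa}=\lambda$), distinguish functions $\lambda\to\mu$ by their restrictions to sets of size $<\kappa$, and extend a proper $\kappa$-complete filter witnessing these distinctions to a $\nu$-complete ultrafilter. The only cosmetic difference is that the paper works with the fine filter generated by the sets $L_\alpha=\{\gamma:\alpha\in b(\gamma)\}$ (whose properness is immediate) and identifies $\mu^{<\kappa}$ with $\mu$ only at the very end, whereas you generate the filter directly from the difference sets (proving properness by your separation lemma) and absorb the coding into the injections $c_s$ up front.
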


\begin{proof} We start by fixing a bijection $b:\lambda\to [\lambda]^{<\kappa}$. For each ordinal $\alpha<\lambda$, we set $L_\alpha = \{\gamma<\lambda \mid \alpha\in b(\gamma)\}$. Using the regularity of $\kappa$, it follows that

$$\mathcal F = \{X\subseteq \lambda \mid \bigl(\exists Y\in [\lambda]^{<\kappa}\bigr)\bigcap_{\alpha\in Y} L_\alpha\subseteq X\}$$
is a $\kappa$-complete filter on $\lambda$. By our assumption on $\kappa$, we can extend the filter $\mathcal F$ to a $\nu$-complete ultrafilter $\mathcal U$.
We show that $|(\mu^{<\kappa})^\lambda/\mathcal U|=\mu^\lambda$.

For each $f\in\mu^\lambda$, we put $h(f) = \langle f\restriction b(\alpha) \mid \alpha < \lambda\rangle _\mathcal U$.
It is easy to see that $h:\mu^\lambda\to (\mu^{<\kappa})^\lambda/\mathcal U$ is one--one. Indeed, for two different $f,g\in \mu^\lambda$, we fix $\alpha<\lambda$ with $f(\alpha)\neq g(\alpha)$, and observe that $L_\alpha\subseteq \{\gamma<\lambda \mid f\restriction~ b(\gamma)\neq g\restriction b(\gamma)\}\in\mathcal U$. So $|(\mu^{<\kappa})^\lambda/\mathcal U|\geq\mu^\lambda$, and since $\mu^{<\kappa}=\mu$, we are done.
\end{proof}

In the proof above, it seemingly appears as if the assumption on $\kappa$ is stronger than actually needed, however by \cite[Theorem 4.7]{BM}, the existence of a $\nu$-complete ultrafilter on $\lambda$ containing all the sets $L_\alpha$ (i.e. $\nu$-complete fine measure on $[\lambda]^{<\kappa}$) is, in fact, equivalent to the statement that each $\kappa$-complete filter on $\lambda$ can be extended to a $\nu$-complete ultrafilter. 

\begin{thm} \label{t:indep} Let $R$ be a ring, $\nu>|R|$ an uncountable cardinal, and let $\kappa$ be a~regular $\mathbf L_{\nu\omega}$-compact cardinal. Consider the least infinite cardinal $\mu$ \st $\mu = \mu^{<\kappa}$. Then $\Add(R^{(I)})\subseteq \Prod(R^{(I)})$ whenever $|I|\geq \mu$.
\end{thm}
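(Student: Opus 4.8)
The plan is to reduce, in several harmless steps, to producing a single large free module as a direct summand of a power of $R^{(\mu)}$, and then to construct the required splitting explicitly from a $\nu$-complete ultrafilter supplied by Proposition~\ref{p:upower}.

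First I would record the elementary reductions. Since $R^{(I)}$ is a nonzero free module, $\Add(R^{(I)})$ is exactly the class of projective modules, so it suffices to show that $R^{(J)}\in\Prod(R^{(I)})$ for every set $J$. Fixing an injection $\mu\hookrightarrow I$ exhibits $R^{(\mu)}$ as a direct summand of $R^{(I)}$, whence $\Prod(R^{(\mu)})\subseteq\Prod(R^{(I)})$, because a summand of a power of $R^{(\mu)}$ is a summand of the corresponding power of $R^{(I)}$. Thus it is enough to prove that $R^{(\theta)}\in\Prod(R^{(\mu)})$ for arbitrarily large cardinals $\theta$. Given $J$, I would choose $\lambda=\lambda^{<\kappa}\ge|J|$ (for instance $\lambda=|J|^{<\kappa}$, using regularity of $\kappa$) and apply Proposition~\ref{p:upower} to obtain a $\nu$-complete ultrafilter $\mathcal U$ on $\lambda$ with $|\mu^\lambda/\mathcal U|=\mu^\lambda\ge|J|$; the target then becomes $R^{(\mu^\lambda)}\in\Prod(R^{(\mu)})$.

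The heart of the argument is to split $R^{(\mu^\lambda)}$ off the power $(R^{(\mu)})^\lambda$. Write $R^{(\mu)}=\bigoplus_{\eta<\mu}Re_\eta$ and fix a set $S\subseteq\mu^\lambda$ of representatives of the classes of $\mu^\lambda/\mathcal U$, so $|S|=\mu^\lambda$. For $f\in S$ put $\hat f=(e_{f(i)})_{i<\lambda}\in(R^{(\mu)})^\lambda$ and define $u\colon R^{(S)}\to(R^{(\mu)})^\lambda$ on the basis $(e_f)_{f\in S}$ by $e_f\mapsto\hat f$. For the retraction I would first note that, since $|R|<\nu$ and $\mathcal U$ is $\nu$-complete, every map $\lambda\to R$ is constant on a set in $\mathcal U$; write $\mathrm{val}_{\mathcal U}(r)\in R$ for that almost-everywhere value. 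Writing $x_i=\sum_\eta x_i(\eta)e_\eta$, set $c_f(x)=\mathrm{val}_{\mathcal U}\big((x_i(f(i)))_{i<\lambda}\big)$ and $p(x)=\sum_{f\in S}c_f(x)e_f$. Linearity of each $c_f$ is immediate from additivity of $\mathrm{val}_{\mathcal U}$, and the computation $c_f(\hat g)=\mathrm{val}_{\mathcal U}\big((\delta_{f(i),g(i)})_i\big)=\delta_{fg}$ (using that distinct members of $S$ disagree on a set in $\mathcal U$) yields $pu=\ident_{R^{(S)}}$, so that $u$ splits and $R^{(\mu^\lambda)}\cong R^{(S)}$ is a direct summand of $(R^{(\mu)})^\lambda\in\Prod(R^{(\mu)})$, completing the proof modulo well-definedness of $p$.

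The main obstacle is precisely that well-definedness: one must check that $p(x)$ lies in the direct sum, i.e.\ that $\{f\in S : c_f(x)\neq0\}$ is finite for every $x$, and here both completeness properties of $\mathcal U$ enter. Since $c_f(x)\neq0$ forces $f(i)\in\supp(x_i)$ for $\mathcal U$-many $i$ and each support is finite, I would first use $\omega_1$-completeness (available as $\nu$ is uncountable) to see that $i\mapsto|\supp(x_i)|$ is constant, say equal to $n$, on some set in $\mathcal U$; enumerating $\supp(x_i)=\{a^1_i<\dots<a^n_i\}$ on that set produces functions $a^1,\dots,a^n\in\mu^\lambda$. Any $f$ with $c_f(x)\neq0$ then satisfies $f(i)=a^{j(i)}_i$ for $\mathcal U$-many $i$, and selecting a single value $j$ with $\{i:j(i)=j\}\in\mathcal U$ shows that $f$ is $\mathcal U$-equivalent to $a^j$. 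Hence at most $n$ members of $S$ contribute, which is the required finiteness. Finally, letting $J$ (and with it $\lambda$ and $\theta=\mu^\lambda$) vary delivers every free module in $\Prod(R^{(I)})$, and therefore $\Add(R^{(I)})\subseteq\Prod(R^{(I)})$.
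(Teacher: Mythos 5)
Your proposal is correct and follows the same skeleton as the paper's proof: reduce to showing that $\Prod(R^{(\mu)})$ contains all free modules, choose $\lambda=\lambda^{<\kappa}$ large enough, and invoke Proposition~\ref{p:upower} to get a $\nu$-complete ultrafilter $\mathcal U$ on $\lambda$ with $|\mu^\lambda/\mathcal U|=\mu^\lambda$. The one genuine difference is in the final step: the paper simply cites \cite[Theorem II.3.8]{EM} for the fact that $(R^{(\mu)})^\lambda/\mathcal U$ is a free direct summand of $(R^{(\mu)})^\lambda$, whereas you reprove this special case by hand, exhibiting an explicit section $e_f\mapsto\hat f$ over a set $S$ of representatives of $\mu^\lambda/\mathcal U$ and a retraction built from the $\mathcal U$-almost-everywhere value function. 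Your verification is sound: $c_f(\hat g)=\delta_{fg}$ uses exactly that distinct representatives disagree on a set in $\mathcal U$, and the finiteness of $\{f\in S: c_f(x)\neq 0\}$ is correctly extracted from $\omega_1$-completeness via the functions $a^1,\dots,a^n$ enumerating the supports. What your route buys is self-containedness (and it makes visible where $|R|<\nu$ and the $\nu$-completeness of $\mathcal U$ are actually used); what it costs is length, and it silently reproves a known result rather than quoting it.
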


\begin{proof} Put $F=R^{(\mu)}$. Notice that $|F|=\mu$, since $|R|<\nu\leq\kappa\leq\mu$. It is enough to prove that $\Add(F)\subseteq \Prod(F)$; in other words---that $\Prod(F)$ contains all projective modules. Let $\xi$ be an arbitrary cardinal. We have to show that $F^{(\xi)}\in\Prod(F)$.

Choose $\lambda = \lambda ^{<\kappa}$ \st $\mu^\lambda\geq\xi$. By Proposition~\ref{p:upower}, there exists a $\nu$-complete ultrafilter $\mathcal U$ on $\lambda$ with $|F^\lambda/\mathcal U|=\mu^\lambda$. From \cite[Theorem II.3.8]{EM}, we deduce that $F^\lambda/\mathcal U$ is isomorphic to a free direct summand in $F^\lambda$. Moreover, we have $F^\lambda/\mathcal U\cong R^{(\mu^\lambda)}\cong F^{(\mu^\lambda)}$, whence $F^{(\xi)}$ is isomorphic to a direct summand in $F^\lambda$, too.
\end{proof}

\begin{exm} Let $R$ be a ring which is not $\Sigma$-algebraically compact as a right module over itself, and such that $|R|$ is not $\omega$-measurable, e.g. let $R$ be a countable non-right perfect ring, for instance $R = \mathbb Z$. Assume that there exists an $\mathbf L_{\omega_1\omega}$-compact cardinal $\kappa$. Then $\Add(R^{(I)})\subseteq \Prod(R^{(I)})$ whenever $|I|\geq 2^\kappa$, while $R^{(I)}$ is not algebraically compact.

Magidor showed in \cite{M} that it is consistent (modulo some large cardinal assumption) that the first measurable cardinal is strongly compact. In this situation, we obtain that if $R$ is a ring as above, then $\Add(R^{(I)})\subseteq \Prod(R^{(I)})$ \iff $|I|$ is $\omega$-measurable. For the only-if part, we refer to Section~\ref{sec:refine}.

On the other hand in \cite{BM}, Bagaria and Magidor construct, relative to the existence of a supercompact cardinal, a model of ZFC in which the first $\mathbf L_{\omega_1\omega}$-compact cardinal is singular (with measurable cofinality).
\end{exm}

\begin{cor} \label{c:proper} Assume that for each uncountable cardinal $\nu$ there exists an $\mathbf L_{\nu\omega}$-compact cardinal, e.g. assume that there is a proper class of strongly compact cardinals. Then for any ring $R$, there is a nonzero free module $F$ \st $\Add(F)\subseteq \Prod(F)$. In particular, over any right non-$\Sigma$-algebraically compact ring, there exists a non-algebraically compact $R$-module $F$ with $\Add(F)\subseteq \Prod(F)$.
\end{cor}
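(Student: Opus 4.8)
The plan is to reduce the statement to a single application of Theorem~\ref{t:indep}. Fix an arbitrary (nonzero) ring $R$. First I would choose an uncountable cardinal $\nu$ with $\nu>|R|$; the successor $\nu=(|R|+\aleph_0)^+$ will do, since it is a successor cardinal, hence regular and uncountable, and strictly larger than $|R|$. By the standing hypothesis applied to this particular $\nu$, there exists an $\mathbf L_{\nu\omega}$-compact cardinal. The only gap between this and the hypotheses of Theorem~\ref{t:indep} is that the latter demands a \emph{regular} $\mathbf L_{\nu\omega}$-compact cardinal, whereas the one handed to us need not be regular. I expect this regularity issue to be the only genuine subtlety; everything else is a matter of choosing $\nu$ and quoting the theorem.

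To bridge this gap I would invoke the Remark following Definition~\ref{d:compact}: if $\kappa_0$ is any $\mathbf L_{\nu\omega}$-compact cardinal, then every cardinal $\geq\kappa_0$ is again $\mathbf L_{\nu\omega}$-compact. In particular the successor $\kappa=\kappa_0^+$ is $\mathbf L_{\nu\omega}$-compact, and being a successor cardinal it is regular. Thus $\kappa$ is a regular $\mathbf L_{\nu\omega}$-compact cardinal, and Theorem~\ref{t:indep} applies with this $\nu$ and $\kappa$: letting $\mu$ be the least infinite cardinal with $\mu=\mu^{<\kappa}$, we obtain $\Add(R^{(I)})\subseteq\Prod(R^{(I)})$ for every index set $I$ with $|I|\geq\mu$. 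Taking $F=R^{(\mu)}$, a nonzero free module, then establishes the first assertion.

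For the final clause I would additionally assume that $R$ is not $\Sigma$-algebraically compact as a right module over itself, keeping $F=R^{(\mu)}$ as above and recalling that $\mu$ is infinite. By Definition~\ref{d:sigma}(1), the ring $R$ is $\Sigma$-algebraically compact \iff $R^{(\kappa')}$ is algebraically compact for \emph{any/every} infinite cardinal $\kappa'$; in view of the ``any/every'' equivalence, algebraic compactness of $R^{(\mu)}$ for our single infinite cardinal $\mu$ would already force $R$ to be $\Sigma$-algebraically compact. Since $R$ is assumed not to be $\Sigma$-algebraically compact, $F=R^{(\mu)}$ cannot be algebraically compact. Hence $F$ is a non-algebraically compact $R$-module satisfying $\Add(F)\subseteq\Prod(F)$, exactly as required.
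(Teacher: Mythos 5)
Your proposal is correct and follows exactly the route the paper intends: the corollary is an immediate application of Theorem~\ref{t:indep}, and you correctly handle the one small wrinkle (obtaining a \emph{regular} $\mathbf L_{\nu\omega}$-compact cardinal) by passing to a successor cardinal via the Remark after Definition~\ref{d:compact}. The final clause is also argued correctly from the ``any/every'' equivalence in Definition~\ref{d:sigma}(1).
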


Note that the last inclusion in Corollary~\ref{c:proper} is necessarily strict since otherwise $F$ would be $\Sigma$-algebraically compact (even product-complete, see \cite[Definition 2.34]{GT}).

\smallskip
We finish this section with a short discussion concerning the possibilities to weaken our large cardinal assumption. For the simplicity, let us assume that the ring $R$ does not have $\omega$-measurable cardinality, and that it is \emph{slender}, i.e.\ all homomorphisms $f\in\Hom _R(R^\omega,R)$ have the property that $f(e_n)=0$ for all but finitely many $n<\omega$; here, $e_n$ denotes the element of $R^\omega$ with $e_n(n) = 1$ and $e_n(m)=0$ whenever $m\neq n$.

Using \cite[Theorem II.3.8 and Corollary III.3.6]{EM}, we see that for a nonzero free module $F$, $\Add(F)\subseteq \Prod(F)$ is equivalent to the statement `there exists arbitrarily large ultrapowers of $F$ with respect to $\omega_1$-complete ultrafilters'. If the latter implies the existence of an $\mathbf L_{\omega_1\omega}$-compact cardinal, it would be a strong indication that the large cardinal assumption in the statement of Theorem~\ref{t:Breaz} could be weakened to `no $\mathbf L_{\omega_1\omega}$-compact cardinals'. However, the results in \cite{JS} suggest that one can obtain large ultrapowers with fixed base even without (sufficiently) regular ultrafilters. The downside in the paper cited is that to achieve this, the authors needed a supercompact cardinal for a start. It seems to be an open problem whether one can prove the same without an $\mathbf L_{\omega _1\omega}$-compact cardinal. We formulate it as

\begin{conj} Assume that the existence of measurable cardinals is not refutable in ZFC. Then there is a model of ZFC with no $\mathbf L_{\omega _1\omega}$-compact cardinals where the following holds: there exists $\mu$ \st for all cardinals $\kappa$ there is $\lambda$ and an $\omega _1$-complete ultrafilter $\mathcal U$ on $\lambda$ \st $|\mu^\lambda/\mathcal U|\geq \kappa$.
\end{conj}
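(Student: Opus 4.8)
The plan is to build on the construction in \cite{JS}, which already supplies, starting from a supercompact cardinal, large ultrapowers over a fixed base by means of ultrafilters that are \emph{not} sufficiently regular, and then to equip the resulting model with the two extra features demanded by the conjecture: the total absence of an $\mathbf L_{\omega _1\omega}$-compact cardinal, and a weakening of the background hypothesis from supercompactness down to the bare consistency of a measurable cardinal. Concretely, I would first extract from \cite{JS} a single cardinal $\mu$ together with a family of ultrafilters $\mathcal U_\lambda$ (on various index sets $\lambda$) witnessing $|\mu^\lambda/\mathcal U_\lambda|\geq\kappa$ for arbitrarily large $\kappa$, and verify carefully that each $\mathcal U_\lambda$ can be taken $\omega _1$-complete; this last point is essential, since it is precisely $\omega _1$-completeness that connects the combinatorial statement to the module-theoretic reformulation of $\Add(F)\subseteq\Prod(F)$ recorded just before the conjecture. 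Note that $\mu$ must be uncountable: for $\mu=\omega$ any $\omega _1$-complete ultrafilter collapses $\mu^\lambda/\mathcal U$ back to $\omega$, so the fixed base has to be genuinely large.

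Second, I would arrange that some measurable cardinal survives in the final model---so that nonprincipal $\omega _1$-complete ultrafilters exist at all---while no cardinal retains the strong filter-extension property that defines $\mathbf L_{\omega _1\omega}$-compactness. By the remark following Definition~\ref{d:compact}, every $\mathbf L_{\omega _1\omega}$-compact cardinal sits at or above the first measurable cardinal $\lambda_0$ and is automatically $\mathbf L_{\lambda_0\omega}$-compact, so it suffices to destroy $\mathbf L_{\omega _1\omega}$-compactness at and above $\lambda_0$. The natural apparatus is a Prikry/Radin- or Magidor-type forcing that singularizes, or otherwise disrupts, each candidate compact cardinal (in the spirit of the models produced in \cite{M} and \cite{BM}), calibrated so as to preserve at least one measurable cardinal and, crucially, to leave the irregular ultrafilters $\mathcal U_\lambda$ of the first step intact and still witnessing large ultrapowers.

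The decisive difficulty---and the reason the statement is recorded only as a conjecture---is the inherent tension between \emph{large ultrapowers via $\omega _1$-complete ultrafilters} and \emph{$\mathbf L_{\omega _1\omega}$-compactness}. In the regular regime these two notions essentially coincide: a $\nu$-complete fine measure on $[\lambda]^{<\kappa}$ both produces large ultrapowers (as in Proposition~\ref{p:upower} and Theorem~\ref{t:indep}) and, by \cite[Theorem 4.7]{BM}, is equivalent to the very filter-extension property underlying compactness. Separating them therefore forces one into genuinely \emph{irregular} ultrafilters whose ultrapowers are nonetheless maximally large, and the only presently known route to such ultrafilters is the method of \cite{JS}, which consumes a supercompact cardinal. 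The heart of the problem is thus to show that the large-ultrapower phenomenon over a fixed base carries no intrinsic $\mathbf L_{\omega _1\omega}$-compactness strength---ideally via an inner-model or core-model analysis establishing that the configuration can be engineered from a single measurable---so that the hypothesis reduces to the irrefutability of measurable cardinals. I expect this consistency-strength reduction, rather than the forcing bookkeeping of the first two steps, to be the genuine obstacle, and it is exactly the point the author flags as open.
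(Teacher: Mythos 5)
This statement is a conjecture: the paper offers no proof of it, and explicitly records it as an open problem. Your proposal is likewise not a proof --- you say yourself that the ``decisive difficulty'' (reducing the consistency strength from a supercompact cardinal to a measurable one while destroying all $\mathbf L_{\omega_1\omega}$-compact cardinals and preserving the large ultrapowers) is ``exactly the point the author flags as open.'' A plan whose central step is acknowledged to be unresolved is not a proof, so there is nothing here to check against the paper beyond noting that both you and the author stop at the same obstacle.

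Beyond that, your first step rests on a misreading of \cite{JS}. That paper concerns the possible sizes of ultrapowers of $\omega$ by ultrafilters \emph{on $\omega$}, and a nonprincipal ultrafilter on $\omega$ is never $\omega_1$-complete; so \cite{JS} cannot directly supply ``a family of ultrafilters $\mathcal U_\lambda$ \dots each $\mathcal U_\lambda$ \dots $\omega_1$-complete'' witnessing large ultrapowers of a fixed base. Indeed, your own (correct) observation that $\mu$ must be uncountable --- since an $\omega_1$-complete ultrafilter collapses $\omega^\lambda/\mathcal U$ to $\omega$ --- is in direct tension with taking the witnessing configuration from \cite{JS}, where the base is $\omega$. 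The author cites \cite{JS} only as heuristic evidence that irregular ultrafilters can produce large ultrapowers, not as an ingredient of a construction. So even as a programme, the proposal's opening move would fail as stated, and the substantive content of the conjecture remains untouched.
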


\bigskip          
\section{Breaz's theorem for non-$\omega$-measurable modules}
\label{sec:refine}

We shall use the following generalization of \cite[Theorem 2.1]{B} (see also \cite[Theorem~2]{DH-Z} and \cite[Theorem III.3.9]{EM}). In what follows, for a filter $\mathcal D$ on $K$, we denote by $\pi_\mathcal D:\prod _{i\in K}U_i\to\prod _{i\in K}U_i/\mathcal D$ the canonical projection. Recall that $\Ker(\pi_\mathcal D)$ is pure in $\prod _{i\in K}U_i$.

\begin{thm} \label{t:huisgen} Let $R$ be a ring and $\psi_0\geq\psi_1\geq\dotsb$ be a descending chain of pp-formulas with one free variable. Let $f:\prod _{i\in I}U_i\to \bigoplus_{j\in J}V_j$ be a homomorphism of $R$-modules. Then there exist $n_0<\omega$, a finite number of $\omega_1$-complete ultrafilters $\mathcal D_1,\dotsc,\mathcal D_k$ on $I$, and a finite subset $J^\prime$ of $J$ such that

$$f(\psi_{n_0}\bigcap_{n=1}^k \Ker(\pi_{\mathcal D_n}))\subseteq \bigoplus _{j\in J^\prime} V_j + \bigcap_{n<\omega}\psi_n\bigoplus _{j\in J}V_j.$$
\end{thm}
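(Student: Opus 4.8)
The plan is to adapt the branch-construction behind the classical theorem controlling homomorphisms from products into direct sums (cf.\ \cite[Theorem III.3.9]{EM} and \cite[Theorem~2]{DH-Z}) so that it simultaneously tracks the descending pp-chain. Write $P=\prod_{i\in I}U_i$ and $D=\bigoplus_{j\in J}V_j$, let $\pi_j\colon D\to V_j$ be the projections, and recall that every $d\in D$ has finite support $\supp(d)=\{j\mid\pi_j(d)\neq0\}$. Two standing facts drive the argument. First, since pp-formulas are preserved by homomorphisms, $f(\psi_{n_0}P)\subseteq\psi_{n_0}D$; and since each $\psi_n$ is a finite matrix functor, $\psi_{n_0}D=\bigoplus_{j}\psi_{n_0}V_j$ while $\bigcap_{n}\psi_n D=\bigoplus_{j}\bigl(\bigcap_{n}\psi_n V_j\bigr)$. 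Thus for $x$ in the left-hand side (i.e.\ $x\in\psi_{n_0}P\cap\bigcap_{n=1}^{k}\Ker(\pi_{\mathcal D_n})$) every component $\pi_j(f(x))$ already lies in $\psi_{n_0}V_j$, and the whole statement amounts to killing every \emph{escape}, by which I mean a coordinate $j$ together with some $n$ for which $\pi_j(f(x))\notin\psi_n V_j$: the conclusion says precisely that, after deleting a finite $J'$ and restricting the domain below a uniform threshold $n_0$, no escape survives on $\bigcap_{n=1}^{k}\Ker(\pi_{\mathcal D_n})$.

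The heart is a proof by contradiction. Assuming no finite $J'$, no threshold $n_0$, and no finite list of $\omega_1$-complete ultrafilters work, I would construct by recursion on $m<\omega$ elements $x_m\in\psi_m P$, fresh coordinates $j_m\in J$ lying outside the support accumulated so far, and a decreasing chain of subsets $A_m\subseteq I$ on which $x_m$ is supported, such that $\pi_{j_m}(f(x_m))\notin\bigcap_n\psi_n V_{j_m}$. At each stage the negation of the conclusion is exactly what permits the next choice: if every element of $\psi_m P$ supported on $A_m$ had image confined to the current finite $J'$ up to the pp-stable part, that finite data would already verify the theorem. The sets recording where successive escapes are detected then generate a filter $\mathcal F$ on $I$.

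Finiteness of support now does the decisive work. Because each $f(x)$ meets only finitely many $V_j$, no single element can realize infinitely many escape directions simultaneously, so a countable union of \emph{tame} sets stays tame; this $\sigma$-additivity makes the dual filter closed under countable intersections, whence any ultrafilter refining it is automatically $\omega_1$-complete, and the same finiteness bounds the number of genuinely independent escape directions, producing only finitely many ultrafilters $\mathcal D_1,\dots,\mathcal D_k$. Reintroducing these absorbs the escaping elements into $\bigcap_{n=1}^{k}\Ker(\pi_{\mathcal D_n})$, so that on this submodule the $J$-support of the image is confined to a finite $J'$; and since the surviving components already lie in $\psi_{n_0}V_j$, a uniform $n_0$ forces those at $j\notin J'$ into $\bigcap_n\psi_n V_j$, contradicting that escapes persisted.

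The main obstacle I expect is the simultaneous bookkeeping in the recursion: one needs a single threshold $n_0$ and a fixed finite number $k$ of ultrafilters serving all elements at once, rather than parameters varying with $x$. Securing this requires a K\"onig-type argument guaranteeing that the tree of escape configurations neither branches nor descends infinitely, and this is precisely where the descending chain condition $\psi_0\geq\psi_1\geq\cdots$ must be married to the finiteness of $J$-supports. The most delicate step inside it is verifying that the extracted ultrafilters are genuinely $\omega_1$-complete rather than merely nonprincipal, since the entire downstream application hinges on this.
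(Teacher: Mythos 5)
Your proposal correctly identifies the target structure --- an ideal of ``tame'' subsets of $I$, countable additivity, and finitely many $\omega_1$-complete ultrafilters arising from the non-tame part --- but the two load-bearing steps are missing or wrongly justified, and they are exactly the ones you flag as ``obstacles''. First, the claim that a countable union of tame sets stays tame because ``no single element can realize infinitely many escape directions'' is a non sequitur. Tameness of a set $T$ is a \emph{uniform} statement: a single $n_0$ and a single finite $J'$ must work for all elements of $\psi_{n_0}\prod_{i\in T}U_i$ at once, and the finite support of each individual image $f(x)$ gives no such uniformity across the (typically uncountable) family of all such $x$. The actual argument reduces this to the countable case of the theorem itself, applied to the restriction of $f$ to $\prod_{m<\omega}\bigl(\prod_{i\in Y_m}U_i\bigr)$; that countable case is a genuine Chase-style sliding lemma (the paper cites \cite[Lemma 11]{H-Z}) and yields the stronger property $(*)$: for \emph{any} countable disjoint family $\langle Y_m\mid m<\omega\rangle$ --- tame or not --- some tail $\bigcup_{m\geq n_0}Y_m$ is tame. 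This stronger form is what later bounds the number of ultrafilters and forces their $\omega_1$-completeness; your version of the additivity claim is too weak even if it were correctly derived.

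Second, your recursion producing ``escapes'' and ``a filter generated by the sets where escapes are detected'' does not by itself produce ultrafilters, and you explicitly defer both the $\omega_1$-completeness and the uniformity of $n_0$, $k$, $J'$ to an unspecified K\"onig-type argument; these are the crux, not a technicality. The paper's route is concrete: assuming $I$ is not tame, property $(*)$ shows that every non-tame set contains an \emph{atom}, i.e.\ a non-tame $X$ that cannot be split into two non-tame pieces; a maximal pairwise disjoint family of atoms $Y_1,\dots,Y_k$ is finite (again by $(*)$), its complement is tame, and $\mathcal D_n=\{Z\subseteq I\mid Z\cap Y_n\notin\mathcal I\}$ is then automatically an $\omega_1$-complete ultrafilter precisely because the tame sets form a $\sigma$-ideal and $Y_n$ is an atom --- no separate verification of completeness is needed. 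The uniform $n_0$ and $J'$ are then extracted over the finitely many pieces $Y_1,\dots,Y_k$ and the tame remainder. Without the countable case as an actual lemma and without the atom construction, your sketch does not close.
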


\begin{proof} 
The result is trivial for $I$ finite. If $I$ is countable, we use \cite[Lemma 11]{H-Z}; all ultrafilters appearing in this case are principal.

For the general case, denote by $\mathcal I$ the set of all $T\subseteq I$ for which there exist $n_0<\omega$ and a finite subset $J^\prime$ of $J$ \st

$$f(\psi_{n_0}\prod _{i\in T} U_i)\subseteq \bigoplus _{j\in J^\prime} V_j + \bigcap_{n<\omega}\psi_n\bigoplus _{j\in J}V_j. \eqno{(\dagger)}$$

It is easy to see that $\mathcal I$ is closed under subsets and finite unions. It is also non-empty and closed under countable unions---we use the same argument as in the proof of \cite[Theorem 2]{DH-Z}:

\smallskip

Let $\bigcup _{m<\omega}Y_m$ be a union of pairwise disjoint subsets of $I$ (not necessarily members of $\mathcal I$). Then we capitalize on the already proven countable case applied to the restriction of $f$: $$\prod _{m<\omega}\Bigl(\prod _{i\in Y_m} U_i\Bigr)\to \bigoplus_{j\in J}V_j.$$
This gives us some $n_0<\omega$ \st $\bigcup _{m\geq n_0} Y_m\in\mathcal I$ (recall that the ultrafilters have been principal). In what follows, let us denote this property of $\mathcal P(I)$ by $(*)$.
\smallskip

We are done if $I\in\mathcal I$, so let us assume that $I\not\in\mathcal I$. Using the property $(*)$, we deduce that each $W\in\mathcal P(I)\setminus\mathcal I$ contains a subset $X\not\in\mathcal I$ \st for all (disjoint) partitions $X_1\cup X_2$ of $X$ precisely one of these sets belongs to $\mathcal I$. Let $\mathcal X$ denote the set of all such sets $X$. Then every $\mathcal Y\subseteq \mathcal X$ whose elements are pairwise disjoint, is finite (again by $(*)$). Let $\mathcal Y = \{Y_1,\dots ,Y_k\}$ be a maximal disjoint subset of $\mathcal X$. It follows that $W=I\setminus\bigcup\mathcal Y\in\mathcal I$; otherwise, an element of $\mathcal X$ would have to be contained in it.

For $n=1,\dots, k$, we put $\mathcal D_n = \{Z\subseteq I\mid Z\cap Y_n\not\in\mathcal I\}$. By the construction, all the $\mathcal D_n$ are $\omega_1$-complete ultrafilters. It is routine to deduce from the property $(*)$ that for all $i = 1,\dots, k$, there exist (uniform) $n_{0,i}<\omega$ and $J_i^\prime\in [J]^{<\omega}$ such that $(\dagger)$ holds with $n_0=n_{0,i},J^\prime = J_i^\prime$ for all $T\in\mathcal P(Y_i)\cap\mathcal I$.


If we denote by $n_{0,0}$ and $J_0^\prime$ the $n_0$ and $J^\prime$, respectively, given by $(\dagger)$ for $W\in\mathcal I$, our desired $n_0$ and $J^\prime$ are defined as $\max\{n_{0,i}\mid i = 0,\dots, k\}$ and $\bigcup _{i=0}^k J_i^\prime$.

\end{proof}

The following proposition is an enhancement of \cite[Proposition 2.3]{B}.

\begin{prop} \label{p:enhance} Let $\langle U_i \mid i\in I\rangle$ and $\langle V_j \mid j\in J\rangle$ be two sequences of modules, $J$ infinite. Assume that there is a non-$\omega$-measurable cardinal $\kappa$ such that $|J|>\sup _{i\in I}|U_i| = \kappa$. Let $f:\prod _{i\in I}U_i\to \bigoplus_{j\in J}V_j$ be an epimorphism.

If $\langle \psi_n \mid n<\omega\rangle$ is a descending sequence of pp-formulas with one free variable \st $f\restriction \psi_n(\prod _{i\in I}U_i)$ is onto $\psi_n(\bigoplus_{j\in J}V_j)$ for infinitely many $n<\omega$, then there exists an infinite $L\subseteq J$ \st for every $j\in L$ the sequence $\psi_0(V_j)\supseteq \psi_1(V_j)\supseteq\psi_2(V_j)\supseteq\dotsb$ eventually stabilizes.
\end{prop}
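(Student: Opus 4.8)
The plan is to apply Theorem~\ref{t:huisgen} to the given data and then extract the infinite set $L$ by a counting argument that exploits the non-$\omega$-measurability of $\kappa$. First I would feed the descending chain $\psi_0\geq\psi_1\geq\dotsb$ and the epimorphism $f$ into Theorem~\ref{t:huisgen}, obtaining some $n_0<\omega$, finitely many $\omega_1$-complete ultrafilters $\mathcal D_1,\dotsc,\mathcal D_k$ on $I$, and a finite $J'\subseteq J$ with
$$f\Bigl(\psi_{n_0}\bigcap_{n=1}^k \Ker(\pi_{\mathcal D_n})\Bigr)\subseteq \bigoplus_{j\in J'} V_j + \bigcap_{n<\omega}\psi_n\bigoplus_{j\in J}V_j.$$
The key observation should be that each $\mathcal D_n$, being $\omega_1$-complete and living on a set $I$ whose associated module $\prod_{i\in I}U_i$ has the cardinality bound in play, is forced to be \emph{principal}: here is where $\kappa$ being non-$\omega$-measurable enters, since a nonprincipal $\omega_1$-complete ultrafilter would witness an $\omega$-measurable cardinal. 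I need to check that the index set on which these ultrafilters sit has non-$\omega$-measurable cardinality; this is controlled by $\sup_{i\in I}|U_i| = \kappa$ together with how $I$ itself is bounded, so I would verify that the relevant cardinal cannot carry a nonprincipal $\omega_1$-complete ultrafilter.

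Once the $\mathcal D_n$ are principal, the purity of $\Ker(\pi_{\mathcal D_n})$ in $\prod_{i\in I}U_i$ (recorded in the preamble to Theorem~\ref{t:huisgen}) combined with principality means that $\bigcap_{n=1}^k \Ker(\pi_{\mathcal D_n})$ is still a ``large'' pure submodule — essentially a product over $I$ with finitely many coordinates deleted — so that $f$ restricted to $\psi_{n_0}$ of this submodule remains onto $\psi_{n_0}(\bigoplus_{j\in J}V_j)$ up to the controlled error terms. The plan is then to combine the containment above with the hypothesis that $f\restriction\psi_n(\prod_{i\in I}U_i)$ is onto $\psi_n(\bigoplus_j V_j)$ for infinitely many $n$, and conclude that for the coordinates $j\notin J'$, the image of $\psi_{n_0}$ lands in $\bigcap_{n<\omega}\psi_n \bigoplus_j V_j$, which pins the $\psi_n(V_j)$ down for large $n$.

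The set $L$ should be taken to be (a cofinite-in-each-relevant-sense piece of) $J\setminus J'$: since $J'$ is finite and $J$ is infinite, $J\setminus J'$ is infinite, and for every $j$ in it I would argue that the chain $\psi_0(V_j)\supseteq\psi_1(V_j)\supseteq\dotsb$ stabilizes, because the surjectivity of $f$ onto the $\psi_n$-definable subgroups for infinitely many $n$ forces $\psi_{n_0}(V_j)=\bigcap_{n<\omega}\psi_n(V_j)$ once the $J'$-part has been absorbed into the finite error. Concretely, for such $j$ one gets $\psi_{n_0}(V_j)\subseteq\psi_n(V_j)$ for all $n$, whence equality from $n_0$ onward.

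The main obstacle I anticipate is the bookkeeping that translates the \emph{global} containment produced by Theorem~\ref{t:huisgen} into a \emph{coordinatewise} stabilization statement for individual $V_j$. The containment controls $f$ of a $\psi_{n_0}$-subgroup only as a sum $\bigoplus_{j\in J'}V_j + \bigcap_n\psi_n\bigoplus_j V_j$, so I must carefully use the directness of the sum decomposition $\bigoplus_{j\in J}V_j$ to project onto a single coordinate $j\notin J'$ and read off that the contribution there already lies in $\bigcap_n\psi_n(V_j)$. Establishing that the finitely many $\mathcal D_n$ really are principal — i.e.\ pinning down exactly which cardinal must be non-$\omega$-measurable and confirming the hypothesis $|U_i|\leq\kappa$ delivers it — is the other delicate point, and it is where the hypothesis $\sup_{i\in I}|U_i|=\kappa$ with $\kappa$ non-$\omega$-measurable is doing its essential work.
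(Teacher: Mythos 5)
Your overall plan (feed the chain and $f$ into Theorem~\ref{t:huisgen}, then use non-$\omega$-measurability of $\kappa$ to neutralize the ultrafilters) matches the paper's strategy, but the way you deploy the hypothesis is wrong. You claim the $\mathcal D_n$ must be \emph{principal}. The ultrafilters live on $I$, and the proposition places no cardinality bound on $I$ whatsoever --- the hypothesis $\sup_{i\in I}|U_i|=\kappa$ bounds the factors, not the index set. In the intended application (Theorem~\ref{t:main2}), $I$ is whatever set witnesses $M^{(J)}\in\Prod(M)$ and may perfectly well be $\omega$-measurable, in which case nonprincipal $\omega_1$-complete ultrafilters on $I$ exist; Section~\ref{sec:indep} of the paper is built on exactly such ultrafilters. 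The correct use of the hypothesis is different: since $\kappa$ lies below the first measurable cardinal, every $\omega_1$-complete ultrafilter is automatically $\kappa^+$-complete, so the \emph{ultraproduct} $\prod_{i\in I}U_i/\mathcal D_n$ collapses to cardinality at most $|\kappa^I/\mathcal D_n|=\kappa$ even when $\mathcal D_n$ is nonprincipal. That cardinality collapse, not principality, is what the non-$\omega$-measurability of $\kappa$ buys.

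Your endgame is also off. The containment from Theorem~\ref{t:huisgen} controls $f$ only on the submodule $\psi_{n_0}\bigcap_{n=1}^k\Ker(\pi_{\mathcal D_n})$, and $f$ restricted to that smaller submodule need not be onto $\psi_{n_0}(\bigoplus_j V_j)$; so projecting onto a coordinate $j\notin J'$ cannot yield $\psi_{n_0}(V_j)\subseteq\bigcap_n\psi_n(V_j)$ for \emph{every} such $j$, and indeed that stronger conclusion is not provable. The paper instead runs a counting argument: combining the containment with the surjectivity of $f\restriction\psi_{n_0}(\prod_i U_i)$, one bounds $\bigl|\bigoplus_{j\in J\setminus J'}\psi_{n_0}(V_j)/\bigcap_n\psi_n(V_j)\bigr|$ by $\bigl|\psi_{n_0}(\prod_i U_i)/\psi_{n_0}\bigcap_n\Ker(\pi_{\mathcal D_n})\bigr|$, identifies $\bigcap_n\Ker(\pi_{\mathcal D_n})$ with $\Ker(\pi_{\mathcal D})$ for the filter $\mathcal D$ generated by the $\mathcal D_n$, and uses purity plus the Second Isomorphism Theorem to bound this by $|\prod_i U_i/\mathcal D|\leq\kappa^k<|J\setminus J'|$. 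Hence the quotient $\psi_{n_0}(V_j)/\bigcap_n\psi_n(V_j)$ vanishes for all but fewer than $|J|$ many $j$, which yields an infinite (in fact co-small, but not necessarily cofinite) $L$. Both the principality claim and the coordinatewise projection are genuine gaps that would break the proof precisely in the case the proposition is designed to handle.
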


\begin{proof} Using Theorem~\ref{t:huisgen}, there exist $J^\prime\in [J]^{<\omega}$, $n_0<\omega$ and $\omega _1$-complete ultrafilters $\mathcal D_1,\dotsc,\mathcal D_k$ on $I$ \st

$$f(\psi_{n_0}\bigcap_{n=1}^k \Ker(\pi_{\mathcal D_n}))\subseteq \bigoplus _{j\in J^\prime} V_j + \bigcap_{n<\omega}\psi_n\bigoplus _{j\in J}V_j.$$

\medskip

By our hypothesis, we can w.l.o.g.\ assume that the map $f\restriction \psi_{n_0}(\prod _{i\in I}U_i)$ is onto $\psi_{n_0}(\bigoplus_{j\in J}V_j)$. Since $f(\psi _{n_0}(\prod _{i\in I}U_i))\subseteq \psi _{n_0}(\bigoplus_{j\in J}V_j)$, we obtain

$$f(\psi_{n_0}\bigcap_{n=1}^k \Ker(\pi_{\mathcal D_n}))\subseteq \Biggl(\bigoplus _{j\in J^\prime} V_j + \bigcap_{n<\omega}\psi_n\bigoplus _{j\in J}V_j\Biggr)\cap \psi _{n_0}\bigoplus_{j\in J}V_j,$$
and from this point on, we use the same computation as in the proof of \cite[Proposition 2.3]{B} to deduce

$$f(\psi_{n_0}\bigcap_{n=1}^k \Ker(\pi_{\mathcal D_n}))\subseteq \psi _{n_0}\left(\bigoplus _{j\in J^\prime} V_j\right) + \bigoplus _{j\in J\setminus J^\prime}\left(\bigcap _{n<\omega} \psi _n(V_j)\right),$$
and further that

$$\left| \bigoplus _{j\in J\setminus J^\prime}\frac{\psi _{n_0} (V_j)}{\bigcap_{n<\omega}\psi _n(V_j)} \right| \leq\left|\frac{\psi_{n_0}\bigl(\prod _{i\in I}U_i\bigr)}{\psi _{n_0}\bigcap_{n=1}^k \Ker(\pi_{\mathcal D_n})}\right|.$$

Now, we observe that $\bigcap_{n=1}^k \Ker(\pi_{\mathcal D_n}) = \Ker(\pi_{\mathcal D})$ where $$\mathcal D = \Bigl\{\bigcup _{n=1}^k e(n)\mid e\in\prod _{n=1}^k \mathcal D_n\Bigr\}$$ is the filter on $I$.
It follows that $\Ker(\pi_{\mathcal D})$ is a pure submodule in $\prod _{i\in I}U_i$, and so $\psi _{n_0}\Ker(\pi_{\mathcal D}) = \Ker(\pi_{\mathcal D})\cap\psi _{n_0}\bigl(\prod _{i\in I}U_i\bigr)$. Thus we have, using Second Isomorphism Theorem,

$$\left| \bigoplus _{j\in J\setminus J^\prime}\frac{\psi _{n_0} (V_j)}{\bigcap_{n<\omega}\psi _n(V_j)} \right| \leq \left|\frac{\Ker(\pi_{\mathcal D}) + \psi _{n_0}\bigl(\prod _{i\in I}U_i\bigr)}{\Ker(\pi_{\mathcal D})}\right| \leq \left|\prod _{i\in I}U_i/\mathcal D\right|.$$

Since all the ultrafilters $\mathcal D_n$ are $\kappa^+$-complete ($\kappa$ is not $\omega$-measurable), we have $|\prod _{i\in I}U_i/\mathcal D_n|\leq |\kappa^I/\mathcal D_n|=\kappa$ for all $n=1,2,\dotsc, k$. It immediately yields that $\left|\prod _{i\in I}U_i/\mathcal D\right| \leq \kappa^k$; note that $\kappa$ can be finite. We conclude that

$$\left| \bigoplus _{j\in J\setminus J^\prime}\frac{\psi _{n_0} (V_j)}{\bigcap_{n<\omega}\psi _n(V_j)} \right|\leq \kappa^k < |J\setminus J^\prime|,$$
which readily implies the existence of a desired infinite $L \subseteq J$.
\end{proof}

We are in a position to state the main result of this section.

\begin{thm} \label{t:main2} Let $R$ be an arbitrary ring and $M$ an $R$-module \st $|M|$ is not $\omega$-measurable. Then $\Add(M)\subseteq \Prod(M)$ \iff $M$ is $\Sigma$-algebraically compact.
\end{thm}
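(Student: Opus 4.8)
The plan is to prove the two implications separately, with essentially all the substance imported from the results proved above. The backward implication, that a $\Sigma$-algebraically compact $M$ satisfies $\Add(M)\subseteq\Prod(M)$, needs no assumption on $|M|$ and was already recorded in the Preliminaries: for such $M$ the module $M^{(I)}$ is algebraically compact and pure in $M^I$, hence splits off, so $M^{(I)}\in\Prod(M)$; a direct summand of $M^{(I)}$ is then a direct summand of $M^I$, giving $\Add(M)\subseteq\Prod(M)$. I would simply recall this.

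For the forward implication I would invoke Definition~\ref{d:sigma}(2): it suffices to show that every descending chain $\psi_0\geq\psi_1\geq\dotsb$ of pp-definable subgroups of $M$ eventually stabilizes. So I fix such a chain and fix any infinite index set $J$ with $|J|>|M|$. Since trivially $M^{(J)}\in\Add(M)$, the hypothesis gives $M^{(J)}\in\Prod(M)$, i.e. there is a set $I$ together with a split epimorphism $f\colon M^I\to M^{(J)}$. The key elementary observation is that a split epimorphism is surjective on every pp-definable subgroup: if $s$ splits $f$ and $\bar v\in\psi_n(M^{(J)})$, then $s(\bar v)\in\psi_n(M^I)$ because homomorphisms preserve pp-formulas, while $f(s(\bar v))=\bar v$; thus $f$ carries $\psi_n(M^I)$ onto $\psi_n(M^{(J)})$ for every $n<\omega$.

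I would then feed this directly into Proposition~\ref{p:enhance}, applied to the constant families $U_i=M$ (for $i\in I$) and $V_j=M$ (for $j\in J$). Setting $\kappa=|M|$, the quantity $\sup_{i\in I}|U_i|=\kappa$ is not $\omega$-measurable by assumption, $J$ is infinite and $|J|>\kappa$, and the surjectivity of $f$ on $\psi_n(\prod_{i\in I}U_i)$ holds for every $n$ (a fortiori for infinitely many). The proposition then yields an infinite $L\subseteq J$ such that for each $j\in L$ the chain $\psi_0(V_j)\supseteq\psi_1(V_j)\supseteq\dotsb$ stabilizes. Since $V_j=M$, this chain is literally $\psi_0(M)\supseteq\psi_1(M)\supseteq\dotsb$, so the original chain of pp-definable subgroups of $M$ stabilizes, which is what was required.

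The argument therefore carries no genuine difficulty of its own: the real work is delegated entirely to Theorem~\ref{t:huisgen} and Proposition~\ref{p:enhance}, and it is there that the hypothesis on $|M|$ is consumed (non-$\omega$-measurability forces the ultrafilters $\mathcal D_n$ to be $\kappa^+$-complete, bounding $|\prod_{i\in I}U_i/\mathcal D|$ by $\kappa^k<|J|$). The only points needing a moment's care are choosing $J$ strictly larger than $|M|$ so that Proposition~\ref{p:enhance} is applicable, and recognizing the retraction $M^I\to M^{(J)}$ as a pp-surjective split epimorphism so that its restrictions to the $\psi_n$ are onto.
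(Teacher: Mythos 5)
Your proof is correct and follows essentially the same route as the paper's: the backward direction via purity of $M^{(\lambda)}$ in $M^\lambda$, and the forward direction by fixing $J$ with $|J|>|M|$, obtaining a split epimorphism $f\colon M^I\to M^{(J)}$, observing that splitting makes $f$ surjective on each $\psi_n$, and feeding this into Proposition~\ref{p:enhance} with $U_i=V_j=M$. Your added justification that a split epimorphism is onto every pp-definable subgroup is a correct elaboration of a step the paper leaves implicit.
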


\begin{proof} The if part follows easily from the purity of the embedding $M^{(\lambda)}\subseteq M^\lambda$ for all cardinals $\lambda$.

For the only-if part, let $J$ be an arbitrary infinite set with $|J|>|M|$. By our assumption, there is a split epimorphism $f:M^I\to M^{(J)}$ for some $I$. To prove that $M$ is $\Sigma$-algebraically compact, it is necessary and sufficient that for each descending chain $\langle \psi_n \mid n<\omega\rangle$ of pp-formulas with one free variable, the chain $\psi_0(M)\supseteq \psi_1(M)\supseteq\psi_2(M)\supseteq\dotsb$ of pp-definable subgroups eventually stabilizes. This follows from Proposition~\ref{p:enhance} applied for $U_i=V_j=M$ for all $i\in I, j\in J$, since $f\restriction \psi_n(M^I)$ is onto $\psi_n(M^{(J)})$, for all $n<\omega$, using the fact that $f$ splits.
\end{proof}

We finish by an instance of \cite[Corollary 1.2]{B}:

\begin{cor} \label{c:end} Let $M$ be an $R$-module such that $\Prod(M)=\ModR$ and $|M|$ is not $\omega$-measurable. Then $R$ is right pure-semisimple.
\end{cor}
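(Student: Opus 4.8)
The plan is to extract $\Sigma$-algebraic compactness of $M$ itself from Theorem~\ref{t:main2}, and then to transfer it to every module by means of the closure properties of the class of $\Sigma$-algebraically compact modules together with the hypothesis $\Prod(M)=\ModR$.

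First I would observe that $\Prod(M)=\ModR$ trivially yields $\Add(M)\subseteq\ModR=\Prod(M)$, so the inclusion required by Theorem~\ref{t:main2} holds. Since $|M|$ is assumed not to be $\omega$-measurable, that theorem applies verbatim and tells us that $M$ is $\Sigma$-algebraically compact.

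Next I would propagate this to all of $\Prod(M)$. Recall from the Preliminaries that $\Sigma$-algebraically compact modules are closed under direct powers, so $M^I$ is $\Sigma$-algebraically compact for every set $I$; this can be read off directly from characterization $(2)$ of Definition~\ref{d:sigma}, since the finite matrix functors commute with direct products and hence $\psi(M^I)=\psi(M)^I$, so a descending chain of pp-definable subgroups of $M^I$ stabilizes precisely when the corresponding chain in $M$ does. Because direct summands are pure submodules and the $\Sigma$-algebraically compact modules are closed under pure submodules (characterization $(2)$ again), every direct summand of $M^I$ is $\Sigma$-algebraically compact, and therefore every member of $\Prod(M)$ is $\Sigma$-algebraically compact.

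Finally, since $\Prod(M)=\ModR$, this means that every right $R$-module is $\Sigma$-algebraically compact, which is exactly the definition of $R$ being right pure-semisimple. There is no genuine obstacle in this argument: the whole difficulty is already concentrated in Theorem~\ref{t:main2}, and what remains is a routine invocation of the closure of the $\Sigma$-algebraically compact modules under powers and pure submodules. The only point I would make sure to verify explicitly is the closure under arbitrary direct powers, but, as noted above, this is immediate from the pp-chain criterion of Definition~\ref{d:sigma}.
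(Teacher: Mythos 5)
Your proof is correct and follows essentially the same route as the paper: deduce from $\Prod(M)=\ModR$ that $\Add(M)\subseteq\Prod(M)$, apply Theorem~\ref{t:main2} to get $\Sigma$-algebraic compactness of $M$, and then conclude that every module is $\Sigma$-algebraically compact. The only difference is that you spell out the last step (closure under direct powers via $\psi(M^I)=\psi(M)^I$ and under pure submodules, hence under direct summands of powers), which the paper leaves implicit; your justification is accurate.
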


\begin{proof} By our assumption, the inclusion $\Add(M)\subseteq\Prod(M)$ trivially holds, and so we can use Theorem~\ref{t:main2} to deduce that $M$ is $\Sigma$-algebraically compact. It follows that all modules are $\Sigma$-algebraically compact.
\end{proof}

\bigskip

\emph{Acknowledgement}: I would like to thank Simion Breaz who drew my attention to this interesting topic.


\bigskip

\end{document}